\theoremstyle{definition}
\newtheorem{thm}{Theorem}[section]
\newtheorem{defn}{Definition}
\newtheorem{lemma}[thm]{Lemma}
\newtheorem{prop}[thm]{Proposition}
\newtheorem{cor}[thm]{Corollary}
\newtheorem{rmk}[thm]{Remark}
\newtheorem{question}{Question}
\newtheorem*{notation}{Notation}
\newtheorem*{combs}{Combs Lemma}
\newtheorem*{greedy}{The Greedy Lemma}
\newtheorem*{Counter-Examples}{Counter-Examples }
\newtheorem*{Pseudo-Power}{Pseudo-Power Theorem}
\newcommand{\interior}[1]{%
  {\kern0pt#1}^{\mathrm{o}}%
}
\newcommand{\Spine}[1]{\mathcal{S}(#1)}
\newcommand{\teeth}[1]{\mathcal{T}(#1)}
\DeclareMathOperator{\cf}{\mathrm{cf}}
\DeclareMathOperator{\Ord}{Ord}
\newcommand*{\axiomfont}[1]{\textsf{\textup{#1}}}
\newcommand{\ZFC}{\axiomfont{ZFC}}
\newcommand{\gch}{\axiomfont{GCH}}
\newcommand{\HC}{\axiomfont{HC}}
\newcommand{\SCH}{\axiomfont{SHC}}
\newtheorem*{thm*}{Pseudo-Power Theorem}
\newtheorem*{thm**}{Counter-Examples }
\DeclareFontFamily{U}{rcjhbltx}{}
\DeclareFontShape{U}{rcjhbltx}{m}{n}{<->rcjhbltx}{}
\DeclareSymbolFont{hebrewletters}{U}{rcjhbltx}{m}{n}
\newtheorem*{theorem*}{Theorem}
\newtheorem*{conjecture*}{Conjecture}
\DeclareMathSymbol{\lamed}{\mathord}{hebrewletters}{108}
\DeclareMathSymbol{\mem}{\mathord}{hebrewletters}{109}
\DeclareMathSymbol{\ayin}{\mathord}{hebrewletters}{96}
\DeclareMathSymbol{\tsadi}{\mathord}{hebrewletters}{118}
\DeclareMathSymbol{\qof}{\mathord}{hebrewletters}{113}
\DeclareMathSymbol{\shin}{\mathord}{hebrewletters}{152}
\DeclareMathSymbol{\resh}{\mathord}{hebrewletters}{114}
\newcommand{\subjclass}[2][1991]{%
  \let\@oldtitle\@title%
  \gdef\@title{\@oldtitle\footnotetext{#1 \emph{Mathematics subject classification.} #2}}%
}
\newcommand{\keywords}[1]{%
  \let\@@oldtitle\@title%
  \gdef\@title{\@@oldtitle\footnotetext{\emph{Key words and phrases.} #1.}}%
}
\author{Gabriel Fernandes}
\begin{document}

%\author{Gabriel Fernandes}
%\address{Instituto de Ciências Matemáticas e de Computação} \email{fernandes@icmc.usp.br}

%\linenumbers

\title{Remarks on Halin's end degree Conjecture  }

\subjclass[2020]{ Primary: 05C63; 03E05; Secondary: 03E35 }
\keywords{Halin's degree, Cardinal Arithmetic, Infinite Graphs}

\maketitle

\begin{abstract}
    We prove new instances of \emph{Halin's end degree conjecture} ($\HC$) in $\ZFC$. In particular, we show that there is a proper class of cardinals $\kappa$ for which Halin's conjecture holds, answering two questions posed by Geschke, Kurkofka, Melcher, and Pitz (2023). We also investigate the relationship between $\HC$ and the \emph{Singular Cardinal Hypothesis}, deriving consistency strength from failures of the former. Moreover, we verify that Halin's conjecture fails on finite intervals of successors of singular cardinals in Merimovich’s model, yielding a new independence result concerning $\HC$.
\end{abstract}
\section{Introduction}

Given an infinite graph $G=(V,E)$, a sequence $r=\{v_n : n \in \omega\} \subseteq V$ is called a \emph{ray} of $G$ if it is injective and satisfies $\{v_n,v_{n+1}\} \in E$ for all $n\in\omega$. If $r_0$ and $r_1$ are rays of $G$ and there exists a ray $r_2$ such that both $r_0 \cap r_2$ and $r_2 \cap r_1$ are infinite, then $r_0$ and $r_1$ are said to be \emph{related}. This is an equivalence relation on the set of rays of $G$, whose classes are called \emph{ends}. In this paper, we characterize when certain configurations of ends occur, clarifying the influence of cardinal arithmetic on structural properties of infinite graphs.

If $\varepsilon$ is an end of $G$, the \emph{degree} of $\varepsilon$, denoted  $\deg(\varepsilon)$, is the supremum of the cardinalities of sets $\mathcal{F}\subseteq\varepsilon$ consisting of pairwise disjoint rays. Halin proved in \cite{halin1965maximalzahl} that this supremum is always attained; see also \cite{diestel}\footnote{Ends and their degrees were introduced in \cite{halin1965maximalzahl}.}.

Halin's end degree conjecture (see Definition \ref{Def_HC}) says that an end of a graph can be characterized in terms of certain typical ray configurations, which would generalize his famous grid theorem. Recent progress in \cite{geschke2023halin} shows that cardinal arithmetic  influences structural properties of ends, revealing  interesting connections between set theory and infinite graph theory. Here, we advance this line of investigation by providing an almost complete characterization of when Halin’s end degree conjecture holds. Our main result describes, for each cardinal $\kappa > 2^{\omega}$,  whether Halin's end degree conjecture holds for ends of degree $\kappa$, showing that its validity depends solely on the position of $\kappa$ within the cardinal hierarchy defined by a cardinal function we call $\resh$. This  allows us to answer two questions from \cite{geschke2023halin}, derive new independence and consistency strength results concerning Halin's end degree conjecture.

We proceed with a few more definitions in order to precisely state Halin's end degree conjecture:
    \begin{itemize}
 
    \item \textbf{(Interior of a path)} If $c=\{p_1,\dots,p_m\}$ is a path in a graph $G$, then the \textit{interior} of $c$ is the set $\{p_2,\dots,p_{m-1}\}$, denoted by $\interior{c}$.
    
    \item \textbf{(Independent paths)} Let $\mathcal{R}$ be a collection of rays of a graph $G$, and let $c_0,c_1$ be paths in $G$. The paths $c_0$ and $c_1$ are \textit{$\mathcal{R}$-independent} if $\interior{c_0} \cap \bigcup \mathcal{R} = \emptyset = \interior{c_1} \cap \bigcup \mathcal{R}$ and $\interior{c_0} \cap \interior{c_1} = \emptyset$.
    
    \item \textbf{($( \mathcal{R}, \mathcal{P})$-associated graph)} Let $\varepsilon$ be an end of a graph $G$, and let $(\mathcal{R},\mathcal{P})$ be such that $\mathcal{R} \subseteq \varepsilon$ is a set of mutually disjoint rays and $\mathcal{P}$ is a set of $\mathcal{R}$-independent paths. The \textit{$(\mathcal{R},\mathcal{P})$-associated graph} $(\mathcal{V},\mathcal{E})$ is defined by $\mathcal{V}=\mathcal{R}$ and $\{r_0,r_1\} \in \mathcal{E}$ if and only if there are infinitely many mutually disjoint paths in $\mathcal{P}$ connecting $r_0$ and $r_1$.
    
    \item \textbf{(Ray graph)} Let $\varepsilon$ be an end of a graph $G$ and a $(\mathcal{R},\mathcal{P})$ be such that $\mathcal{R} \subseteq \varepsilon$ is a set of mutually disjoint rays, $|\mathcal{R}|=\deg(\kappa)$ and $\mathcal{P}$ is a set of $\mathcal{R}$-independent paths. When the $(\mathcal{R},\mathcal{P})$-associated graph $(\mathcal{V},\mathcal{E})$ is connected, we say that $(\mathcal{R},\mathcal{P})$ is a \textit{ray graph for $\varepsilon$}.
\end{itemize}

\begin{defn}[\textbf{Halin's End Degree Conjecture}]\label{Def_HC}
Let $\kappa$ be a regular cardinal. We say that \textit{Halin's end degree conjecture holds for $\kappa$} if, for every graph $G$ and every end $\varepsilon$ of $G$ with $\deg(\varepsilon)=\kappa$, there exists $(\mathcal{R},\mathcal{P})$ a ray graph for $\varepsilon$.

\end{defn}

\begin{notation}
When Halin's end degree conjecture holds for $\kappa$, we write that \textit{$\HC(\kappa)$ holds}.
\end{notation}

%Informally, for regular $\kappa$, the conjecture asserts that every end of degree $\kappa$ contains a subdivision of a graph of the form $S \times \omega$, where $S$ is a $\kappa$-star\footnote{A $\kappa$-star is the graph $G=(\{v_\alpha \mid \alpha < \kappa\}, \{\{v_0,v_\alpha\} \mid 0<\alpha<\kappa\})$.}. %See \cite[Lemma 3.1]{geschke2023halin} for a formulation of $\HC(\kappa)$ in terms of stars.  

Halin conjectured in \cite{halin2000miscellaneous} that $\HC(\kappa)$ holds for all cardinals $\kappa$. Twenty years later, \cite{geschke2023halin} showed that whether it holds or not depends on the degree $\kappa$ of the ends.

In \cite{geschke2023halin}, it is shown that for any sufficiently large regular cardinal $\kappa$, it is consistent that $\HC(\kappa)$ fails. We approach the problem of determining when $\HC(\kappa)$ holds from a global perspective. Our main result, Theorem~\ref{main}, shows that whether $\HC(\kappa)$ holds depends essentially on cardinal arithmetic properties of $\kappa$.

We prove in Theorem~\ref{thm_strong} that there is a proper class of regular cardinals for which $\HC(\kappa)$ holds. This provides a positive answer to \cite[Questions~1) and~3)]{geschke2023halin}\footnote{For a consistency result regarding a negative answer to \cite[Question~2)]{geschke2023halin}, see \cite{aurichifernandesjunior}.}.

In Section~\ref{Section_Basics}, we prove several positive results concerning $\HC(\kappa)$, extending the analysis in \cite[Section~4]{geschke2023halin}.

In Section~\ref{Section_Resh_Thm}, we introduce the function $\resh$ to track certain $\omega$-inaccessible cardinals. Our main result, Theorem~\ref{main}, is proved in this section. It establishes that for a given cardinal $\kappa > 2^{\omega}$, it is possible to determine whether $\HC(\kappa)$ holds based on the interval $[\resh(\gamma),\resh(\gamma+1)]$ containing $\kappa$.

\begin{defn}[\cite{CardArthSkept}]
A cardinal $\kappa$ is said to be \emph{$\theta$-inaccessible} if and only if for all $\mu < \kappa$, we have $\mu^\theta < \kappa$.
\end{defn}

\begin{notation}
If $\mu$ and $\lambda$ are cardinals with $\mu \leq \lambda$, we set $(\mu, \lambda] := \{ \theta \leq \lambda \mid \theta \text{ is a cardinal and } \mu < \theta \leq \lambda \}$. Other interval notations are similar and will be clear from context.
\end{notation}

\begin{thm}\label{main}
Define $\resh: \mathrm{ON} \to \mathrm{CARD}$ by
\[
\resh(\beta)=
\begin{cases}
  2^{\omega}, & \text{if } \beta = 0; \\[4pt]
  (\resh(\gamma)^{+\omega})^{\omega}, & \text{if } \beta = \gamma + 1 \text{ and } \mathrm{cf}(\gamma) \neq \aleph_0; \\[4pt]
  \resh(\gamma)^{\omega}, & \text{if } \beta = \gamma + 1 \text{ and } \mathrm{cf}(\gamma) = \aleph_0; \\[4pt]
  \bigcup_{\alpha < \beta} \resh(\alpha), & \text{if } \beta \text{ is a limit ordinal.}
\end{cases}
\]

Suppose $\kappa > \mathfrak{c}$ is a cardinal such that whenever $\kappa = \resh(\gamma)$
for some limit ordinal $\gamma$ with $\mathrm{cf}(\gamma)$ not $\omega$-inaccessible, 
then necessarily $\mathrm{cf}(\gamma) > 2^{\omega}$. Under this hypothesis, the following hold:

\begin{itemize}
    \item[(a)] If $\kappa \in (\resh(\gamma),\, \resh(\gamma)^{+\omega}]$ and $\mathrm{cf}(\kappa) \neq \aleph_0$, then $\HC(\kappa)$ holds.

    \item[(b)] If $\kappa \in (\resh(\gamma)^{+\omega},\, \resh(\gamma+1)]$ and $\mathrm{cf}(\kappa) \neq \aleph_0$, then $\HC(\kappa)$ fails.

    \item[(c)] If $\kappa \in (\resh(\gamma)^{+\omega},\, \resh(\gamma+1)]$ and $\mathrm{cf}(\gamma)=\aleph_0$, then $\HC(\kappa)$ fails.

    \item[(d)] $\HC(\kappa)$ holds whenever $\kappa = \resh(\gamma)$ is a singular cardinal and either $\mathrm{cf}(\gamma)=\omega$ or $\mathrm{cf}(\gamma)$ is $\omega$-inaccessible.

    \item[(e)] $\HC(\kappa)$ fails whenever $\kappa = \resh(\gamma)$ is a singular cardinal and $\mathrm{cf}(\kappa)$ is not $\omega$-inaccessible.
\end{itemize}

\end{thm}

In particular, item (a) implies that $\HC$ holds on the interval $(\mathfrak{c}, \mathfrak{c}^{+\omega}]$ and fails on  $[\mathfrak{c}^{+\omega+1}, (\mathfrak{c}^{+\omega})^{\omega}]$.

 Section~\ref{Section_Merimovich}, explores connections between $\SCH$ and $\HC$, deriving large cardinal strength from failures of the latter, and verifies that $\HC$ fails on finite intervals in Merimovich’s models \cite{merimovichmodel}, where for each $n \in \omega\setminus\{0,1\}$ we have that $2^{\kappa}=\kappa^{+n}$ for every cardinal $\kappa$. %Finally, in Section~\ref{Section_Questions}, we present  questions that we consider interesting and relevant for further developments in this topic.

%%%%%%%%%%%%%%%%%%%%%%%%%%%%%%%%%%%%%%%%%%%%%%%%
%%%%%%%%%%%%
%%%%%%%%%%%%   Section 2 
%%%%%%%%%%%%
%%%%%%%%%%%%%%%%%%%%%%%%%%%%%%%%%%%%%%%%%%%%%
\section{Positive cases of $\HC$ \label{Section_Basics}}

The results in this section are  based on the results on bipartite graphs of \cite[Section 4]{geschke2023halin}. By focusing on $\omega$-inaccessible cardinals we are able to eliminate the hypothesis that appears in \cite[Section 4]{geschke2023halin}, that $\gch$ holds. At the end of this section we use the techniques from \cite{geschke2023halin} and  apply our results on bipartite graphs in order to obtain positive instances of $HC(\kappa)$.

\begin{defn}
A \emph{$(\lambda,\kappa)$-graph} is a bipartite graph $G = (A, B, E)$, where
    \begin{itemize}
        \item $E$ is the edge-relation on the vertex set  $A \cup B$;
        \item  $|A| = \lambda < \kappa = |B|$; 
        \item  $\left(A \times \{b\}\right) \cap E$ is infinite, for every $b \in B$; and
        \item $A^2 \cap E = B^2 \cap E = \emptyset$.   
    \end{itemize}  
\end{defn}

\begin{defn}
   If $(A,B)$ is a $(\lambda,\kappa)$-graph, we say that $(A',B')$ is a subgraph of $(A,B)$ if and only if $A'\subseteq A$, $B' \subseteq B$ and for every $b \in B'$ the set $(A'\times\{b\}) \cap E$ is infinite. 
\end{defn}

\begin{notation}
    If $G=(V,E)$ is a graph and $v$ is a vertex of $G$, we set $N_{G}(v)=\{w \in V \mid \{v,w\}\in E\}$.
\end{notation}

\begin{prop} \label{Prop_HCsucc} Let $\mu$ be a cardinal $\geq 2$. Then for every natural $n \geq 1$ it holds that $(\mu^{\omega})^{+n}$ is a regular $\omega$-inaccessible cardinal and that $((\mu^{\omega})^{+n})^{\omega}=(\mu^{\omega})^{+n}$. Moreover  $(\mu^{\omega})^{+\omega})$ is $\omega$-inaccessible. %$mathrm{HC}\left((\mu^{\omega})^{+n}\right)$ holds for every natural $n \geq 1$. In particular, for every  $n \in \omega$ we have that $\HC((\mathfrak{c})^{+n})$ holds.
\end{prop}
\begin{proof} %We have for every $n \geq 1 $ that $(\mu^{\omega})^{+n}$ is regular cardinal that is $\omega$-inaccessible. Then, by Theorem \ref{thm_strong} we have that $\HC((\mu^{\omega})^{+n})$ holds for all $n \geq 1$.  \qedhere
We prove it by induction on $n \geq 1$. For $n=1$, let $\kappa=(\mu^{\omega})^{+}$. Since every successor cardinal is regular, it follows that $\kappa$ is regular. 

For every $\mu$, it holds that $(\mu^{\omega})^{\omega}=\mu^{\omega}$. If $\theta < \kappa$, then $\theta \leq \mu^{\omega}$ and therefore $\theta^{\omega} \leq (\mu^{\omega})^{\omega} = \mu^{\omega} < \kappa$. Thus $\kappa$ is $\omega$-inaccessible. 

If $f \in \kappa^{\omega}$, from the regularity of $\kappa > \mu^{\omega} \geq  2^{\omega} > \omega$, it follows that $sup(Im(f)) = \alpha < \kappa$ for some $\alpha$. Then $f \in \alpha^{\omega}$. 

Notice that $\alpha < \kappa$, implies $|\alpha|\leq \mu^{\omega}$, then $|\alpha|^{\omega} \leq \mu^{\omega} $. It follows that $\kappa^{\omega}=|\bigcup_{\alpha \in \kappa} \alpha^{\omega}|$. Then $\kappa^{\omega} \leq \kappa \times \mu^{\omega} = \kappa  $.

Suppose the proposition holds true for $n$ and let $\delta=(\mu^{\omega})^{+n+1}$. If $\theta < \delta$, then $\theta \leq (\mu^{\omega})^{+n}$. From our induction hypothesis $(\mu^{\omega})^{+n} =((\mu^{\omega})^{+n})^{\omega} \geq \theta^{\omega}$. Then $\delta$ is a regular $\omega$-inaccessible cardinal. 

Let $f \in \delta^{\omega}$. Since $\delta$ is regular, there is an ordinal $\beta \in \delta$ such that $f \in \beta^{\omega}$. Then $\delta^{\omega}=|\bigcup_{\beta \in \delta} \beta^{\omega}|$. Using the induction hypothesis that $((\mu^{\omega})^{+n})^{\omega}=(\mu^{\omega})^{+n}$, we have $\delta^{\omega} \leq \delta \times (\mu^{\omega})^{+n} = \delta$.  This concludes the induction.  

In order to verify that $(\mu^{\omega})^{+\omega}$ is an $\omega$-inaccessible cardinal, let
 $\delta < (\mu^{\omega})^{+\omega}$. For some $n \in \omega$, we have $\delta < (\mu^{\omega})^{+n}$. Hence $\delta^{\omega} \leq ((\mu^{\omega})^{+n})^{\omega}=(\mu^{\omega})^{+n} < (\mu^{\omega})^{+\omega}$.
\end{proof}

\begin{prop} \label{Prop_HCSucc2}
    Suppose that $\kappa$ is a singular cardinal and $\kappa$ is $\omega$-inaccessible. Then there is a sequence $\langle \theta_{\beta} \mid \beta < \cf(\kappa) \rangle$ increasing and cofinal in $\kappa$ such that each $\theta_{\beta}$ is a regular $\omega$-inaccessible cardinal.
\end{prop}
\begin{proof}
    Let $\langle \delta_{\beta} \mid \beta < \cf(\kappa) \rangle$ be a sequence of cardinals that is increasing and cofinal in $\kappa$. For each $\beta < \cf(\kappa)$ let $\mu_{\beta}=(\delta_{\beta}^{\omega})^{+}$. From Proposition \ref{Prop_HCsucc} each $(\delta_{\beta}^{\omega})^{+}$ is an $\omega$-inaccessible cardinal. Since $\kappa$ is $\omega$-inaccessible and singular, we have that $(\delta_{\beta}^{\omega})^{+} < \kappa$ for every $\beta < \cf(\kappa)$. We can then extract a  sequence $\langle \theta_{\beta} \mid \beta < \cf(\kappa) \rangle$ from $\langle(\delta_{\beta}^{\omega})^{+} \mid \beta < \cf(\kappa) \rangle$ that is increasing and cofinal in $\kappa$ and such that every $\theta_{\beta}$ is a regular $\omega$-inaccessible cardinal.
\end{proof}

\begin{lemma}\label{bipartite1} Let $(A,B,E)$ be a $(\lambda,\kappa)$-graph such that  $(A \times \{b\})\cap E $ is countable, for every $b \in B$.
    \begin{itemize} 
        \item[(1)]  If $\kappa$ is a regular  $\omega$-inaccessible cardinal, then there exists a $(\aleph_{0},\kappa)$-subgraph of $(A,B, E)$. 
        \item[(2)] If $\cf(\kappa) < \kappa$, $\kappa$ is an $\omega$-inaccessible cardinal such that  $\cf(\kappa)$ is an $\omega$-inaccessible cardinal and $\lambda < \cf(\kappa)$, then  there exists a $(\aleph_{0},\kappa)$-subgraph of $(A, B, E)$.
        \item[(3)] \cite[Lemma 4.1]{geschke2023halin} If $\cf(\lambda) > \omega$ and $\cf(\kappa) \neq \cf(\lambda)$, then there is a  $(\lambda',\kappa)$-subgraph of $(A,B,E)$, for some $\lambda' < \lambda$.
     
        \item[(4)] If $\kappa$ is a singular cardinal that is also an $\omega$-inaccessible cardinal and $cf(\kappa)$ is an $\omega$-inaccessible cardinal, then either there exists an $(\aleph_{0},\kappa)$-subgraph of $(A,B, E)$; or else there exists a sequence $\langle \theta_{\beta} \mid \beta <\cf(\kappa) \rangle$ of regular $\omega$-inaccessible cardinals cofinal in $\kappa$ and a sequence $\langle (A_{\beta}, B_{\beta})  \mid \beta <\cf(\kappa) \rangle $ satisfying the following two conditions: 
\begin{itemize}
    \item[(i)] for each $\beta < \cf(\kappa)$ we have that $(A_{\beta},B_{\beta})$ is a $(\aleph_{0},\theta_{\beta})$-subgraph of $(A,B)$ 
    \item[(ii)] for  $\beta < \beta' < \cf(\kappa)$ we have $A_{\beta} \cap A_{\beta'}=\emptyset$ and $B_{\beta} \cap B_{\beta'} = \emptyset$. 
    
\end{itemize} 
     \end{itemize}
\end{lemma}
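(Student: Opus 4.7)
The plan is to handle (1), (2), and (3) by direct pigeonhole on the map $b \mapsto N(b) \in [A]^{\aleph_0}$, and to bootstrap (4) from (1) by a transfinite recursion of length $\cf(\kappa)$.

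For (1), the $\omega$-inaccessibility of $\kappa$ gives $\lambda^\omega < \kappa$, so the map $b \mapsto N(b)$ partitions $B$ into at most $\lambda^\omega$ classes; regularity of $\kappa$ then forces some class to have size $\kappa$, whose members share a countable neighborhood $A' \subseteq A$ that yields the desired $(\aleph_{0},\kappa)$-subgraph. The hypothesis $\cf(\lambda) = \omega$ is not strictly needed for this counting, but the argument can be reorganized through a decomposition $A = \bigcup_n A_n$ with $|A_n| < \lambda$ if one prefers. Item (2) is essentially the same argument with cofinality replacing regularity: $\lambda < \cf(\kappa)$ and the $\omega$-inaccessibility of $\cf(\kappa)$ give $\lambda^\omega < \cf(\kappa)$, so $B$ is split into fewer than $\cf(\kappa)$ classes, and one of them must have full size $\kappa$ by the definition of cofinality. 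Item (3) is cited verbatim from \cite{geschke2023halin}, so there is nothing to prove.

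For (4), I would argue by contradiction: suppose no $(\aleph_0,\kappa)$-subgraph of $(A,B,E)$ exists, so for every countable $S \subseteq A$ the set $\{b \in B : |N(b) \cap S| = \aleph_0\}$ has size strictly less than $\kappa$. Item (2) lets me additionally assume $\cf(\kappa) \leq \lambda$. Fix a cofinal sequence $\langle \theta_\beta : \beta < \cf(\kappa) \rangle$ of regular $\omega$-inaccessible cardinals in $\kappa$ with each $\theta_\beta > \lambda$; such a sequence exists because the class of $\omega$-inaccessible cardinals is closed under limits and $\cf(\kappa)$ is itself $\omega$-inaccessible, hence uncountable. I recursively define $(A^\beta, B^\beta)$: at stage $\beta$, writing $A^{<\beta} = \bigcup_{\alpha < \beta} A^\alpha$ and $B^{<\beta} = \bigcup_{\alpha < \beta} B^\alpha$, note that $|A^{<\beta}| \leq |\beta| \cdot \aleph_0 < \cf(\kappa) \leq \lambda$, so $|A^{<\beta}|^\omega < \kappa$. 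Split the bad set $B^\infty_\beta := \{b \in B : |N(b) \cap A^{<\beta}| = \aleph_0\}$ according to the value of $N(b) \cap A^{<\beta}$; each fibre has size less than $\kappa$ by the standing assumption, and there are fewer than $\kappa$ fibres, so $|B^\infty_\beta| < \kappa$. Similarly $|B^{<\beta}| < \kappa$. Picking $B'' \subseteq B \setminus (B^{<\beta} \cup B^\infty_\beta)$ of size $\theta_\beta$, every neighborhood in the induced subgraph on $(A \setminus A^{<\beta}, B'')$ is countably infinite, so item (1) furnishes the required $(\aleph_0, \theta_\beta)$-subgraph $(A^\beta, B^\beta)$, automatically disjoint from all previous stages.

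The technical heart is the bound $|B^\infty_\beta| < \kappa$ at each stage of the recursion, which is the only place that simultaneously uses the $\omega$-inaccessibility of $\kappa$ (to control the number of countable subsets of the accumulated $A^{<\beta}$) and the standing failure assumption (to bound each fibre). I expect the delicate point to be confirming that the product of fibre count and sup of fibre sizes remains strictly below the singular cardinal $\kappa$ uniformly in $\beta < \cf(\kappa)$, since one can no longer invoke the regularity of $\kappa$ that was available in (1)--(2). A secondary technicality is verifying the existence of the cofinal sequence of regular $\omega$-inaccessible cardinals below $\kappa$, which is where $\cf(\kappa)$ being $\omega$-inaccessible --- and in particular uncountable --- plays its role.
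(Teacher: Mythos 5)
Your items (1)--(3) match the paper's own argument: the same pigeonhole on $b \mapsto N(b)$ using $\lambda^{\omega}<\kappa$ (resp.\ $\lambda^{\omega}<\cf(\kappa)$), and indeed the paper's proof of (1) never uses $\cf(\lambda)=\omega$ either. For (4) you run the same recursion of length $\cf(\kappa)$ as the paper, but you close the key step by a different route: the paper shows the set of unused $b$ with finite trace on the unused side of $A$ cannot have size $\kappa$ by observing that otherwise $\bigl(\bigcup_{\alpha<\beta}A_{\alpha},\,B\setminus B'_{\beta}\bigr)$ is a bipartite graph whose left side has size $<\cf(\kappa)$, to which item (3) (or, more directly, item (2)) applies and produces a forbidden $(\aleph_0,\kappa)$-subgraph; you instead bound your $B^{\infty}_{\beta}$ directly by fibering over the possible values of $N(b)\cap A^{<\beta}$ and applying the standing assumption fibrewise, which is essentially the counting that the appeal to (2)/(3) performs internally. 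The point you flagged is real but closes with what you already have: since $\kappa$ is singular, ``fewer than $\kappa$ fibres each of size $<\kappa$'' proves nothing, but $|A^{<\beta}|<\cf(\kappa)$ and the $\omega$-inaccessibility of $\cf(\kappa)$ give at most $|A^{<\beta}|^{\omega}<\cf(\kappa)$ fibres, and a union of fewer than $\cf(\kappa)$ sets each of size $<\kappa$ has size $<\kappa$; with that sharpening the recursion goes through (your reduction via (2) to the case $\cf(\kappa)\le\lambda$ is harmless but not needed). Two smaller corrections: the existence of cofinally many regular $\omega$-inaccessible $\theta_{\beta}>\lambda$ below $\kappa$ does not follow from closure of the $\omega$-inaccessibles under limits (such limits need not be regular); the right reason is that for any $\mu<\kappa$ the cardinal $(\mu^{\omega})^{+}$ is regular, $\omega$-inaccessible and still below $\kappa$ because $\kappa$ is an $\omega$-inaccessible limit cardinal --- this is exactly the paper's choice of the cardinals $\mu_{\beta}^{+}$. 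Finally, when you apply (1) to $(A\setminus A^{<\beta},B'')$ you are really applying its proof rather than its statement, since $|A\setminus A^{<\beta}|$ need not have countable cofinality; as you yourself noted, that hypothesis plays no role in the argument.
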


\begin{proof} (1)  Since $\kappa$ is $\omega$-inaccessible and $\lambda < \kappa$, we have $\lambda^{\omega} < \kappa$. For each $b \in B$ let $A_{b} = dom((A \times\{b\}) \cap E)$, i.e. $A_{b}$ is such that $A_{b}\times\{b\} = (A\times\{b\})\cap E$. Let $f:B \rightarrow [A]^{\omega}$ be defined by $f(b)=A_{b}$ for each $b \in B$. Then $f$ is such that $|B|=|dom(f)|=\kappa$ and $|ran(f)| \leq |[A]^{\omega}|=\lambda^{\omega} < \kappa$. 
By the regularity of $\kappa$, it follows that there exist $A' \subseteq A$ such that $|f^{-1}[\{A'\}]|=\kappa$. Let $B' = f^{-1}[\{A'\}]$.  Thus $(A',B')$ is a $(\aleph_{0},\kappa)$-subgraph of $(A,B)$.

(2)    From $\lambda < \cf(\kappa)$, it follows that $\lambda^{\omega} < \cf(\kappa)$. Analogous to item (1), define a function $g:B \rightarrow [A]^{\omega}$ by setting $g(b)=dom((A \times \{b\}) \cap E)$ for each $b \in B$. Since $|[A]^{\omega}| = \lambda^{\omega} < cf(\kappa)$, there is $A' \in [A]^{\omega}$ such that $|g^{-1}[A']|=\kappa$. Let $B'= g^{-1}[A']$. Hence $(A',B')$ is a $(\aleph_0,\kappa)$-subgraph of $(A,B)$.

(4) Suppose that there is no $(\aleph_{0},\kappa)$-subgraph of $(A,B)$. Let $\langle \theta_{\beta} \mid \beta \in cf(\kappa) \rangle$ be a cofinal sequence in $\kappa$ given by Propostition \ref{Prop_HCSucc2} such that for all $\beta \in cf(\kappa)$ it holds that $\theta_{\beta}$ is a regular $\omega$-inaccessible cardinal. We will define a sequence $\langle (A_{\beta}, B_{\beta})  \mid \beta <\cf(\kappa) \rangle $ satisfying the following two conditions: 
\begin{itemize}
    \item[(i)] for each $\beta < \cf(\kappa)$ we have that $(A_{\beta},B_{\beta})$ is a $(\aleph_{0},\theta_{\beta})$-subgraph of $(A,B)$ 
    \item[(ii)] for  $\beta < \beta' < \cf(\kappa)$ we have $A_{\beta} \cap A_{\beta'}=\emptyset$ and $B_{\beta} \cap B_{\beta'} = \emptyset$. 
    
\end{itemize} Suppose we have built a sequence $\langle (A_{\alpha},B_{\alpha}) \mid \alpha < \beta \rangle $ satisfying (i) and (ii) for $\alpha \in \beta$.  Let $A'_{\beta}= A \setminus \bigcup_{\alpha< \beta} A_{\alpha}$ and  

\[B'_{\beta} := \left\{b \in B \setminus \bigcup_{\alpha < \beta} B_{\alpha} \mid dom((A \times \{b\}) \cap E) \cap A'_{\beta} \text{ is infinite}\right\}.\]

Since each $A_{\alpha}$ is countable, it follows that  $|\bigcup  \{A_{\alpha} \mid \alpha < \beta\} | < \cf(\kappa)$. If $|B'_{\beta}|<\kappa$, then  $|B\setminus B'_{\beta}|=\kappa$. Notice that $$B \setminus B_{\beta}^{'}=\{b \in B \mid b \in \bigcup_{\alpha < \beta}B_{\alpha} \vee (b \in B\setminus \bigcup_{\alpha < \beta}B_{\beta}  \wedge dom((A\times\{b\})\cap E) \cap A_{\beta}' \text{ is finite } )\}$$  and from our induction hypothesis, it follows that $|\bigcup_{\alpha < \beta} B_{\beta}| \leq sup\{ \theta_{\alpha} \mid \alpha < \beta \} < \kappa$.
Therefore $$|\{b \in B \setminus \bigcup_{\alpha < \beta}B_{\beta} \mid dom((A\times\{b\})\cap E) \cap A'_{\beta} \text{ is finite }\}|=\kappa$$ and since $A'_{\beta}=A \setminus \bigcup_{\alpha < \beta}A_{\alpha}$, it follows that 
$$|\{b \in B \setminus \bigcup_{\alpha < \beta}B_{\beta} \mid dom((A\times\{b\})\cap E) \cap \bigcup_{\alpha < \beta} A_{\alpha} \text{ is infinite }\}|=\kappa$$

Applying $(2)$ of Lemma \ref{bipartite1} we find a $(\aleph_0,\kappa)$-subgraph of   
$\left(\bigcup_{\alpha < \beta} A_{\alpha}, B \setminus B'_{\beta}\right)$, and hence a $(\aleph_0,\kappa)$-subgraph of $(A,B)$, which contradicts our hypothesis on $(A,B)$. Thus $|B'_{\kappa}|=\kappa$. Let $B''_{\beta} \subset B'_{\beta}$ such that   $|B''_{\beta}|=\theta_{\beta}$ and  $A''_{\beta}\subseteq A'_{\beta}$ such that $A''_{\beta}=\{ a \in A'_{\beta} \mid \exists b ( b \in B''_{\beta} \wedge \{a,b\} \in E \}$. Then by item (1) of Lemma \ref{bipartite1} there is $(A'''_{\beta},B_{\beta}^{'''})$ a $(\aleph_0,\theta_{\beta})$ sub-graph of $(A''_{\beta},B''_{\beta})$. It follows that $(A'''_{\beta},B'''_{\beta})$ is a $(\aleph_0,\theta_{\beta})$-subgraph  of $(A,B)$ satisfying (i) and (ii) as desired and we set $(A_{\beta},B_{\beta})$ to be $(A'''_{\beta},B'''_{\beta})$. \qedhere
\end{proof}

Let us introduce some more notation in order to state two results from \cite{geschke2023halin} that we will need: 

\begin{defn}[Combs]
Let $\varepsilon$ be an end of a graph $G=(V,E)$ and $U \subseteq V$ disjoint from $\varepsilon$. An  $(\varepsilon, U)$-comb is a set of the form $C=R \cup \bigcup\mathcal{D}$, where $R$ is a ray that belongs to $\varepsilon$ and $\mathcal{D}$ is a collection of independent paths with ending points in $U$. We say that $v \in V$ is a tooth of a $(\varepsilon,U)$-comb $C$ if $v \in C \cap U$. We set $\teeth{C}=\{v \in V \mid v \text{ is a tooth of } C\}$ and $\Spine{C}=R$.
    
\end{defn}
Since Halin's conjecture can be phrased in terms of stars and we shall exploit that fact very often, we introduce some notation on stars and ray stars. 
\begin{defn}[Stars]
Let $\kappa$ be a cardinal. A \textit{$\kappa$-star} is a graph of the form $G=(\{v_{\alpha} \mid \alpha < \kappa\}, \{ \{v_0,v_{\alpha} \} \mid 0 < \alpha < \kappa \})$ such that $\beta < \gamma < \kappa $ implies $v_{\beta} \neq v_{\gamma}$. A \textit{$\kappa$-star of rays} is a triple $S=(G,\{r_{\alpha} \mid \alpha < \kappa\},\{p_{\alpha,j} \mid \alpha < \kappa \wedge j \in \omega \})$ consisting of the following:\begin{itemize}
    \item[(i)] A graph $G=(V,E) $ with $V=\bigcup_{\alpha<\kappa}r_{\alpha} \cup \bigcup\{ p_{\alpha,j} \mid \alpha < \kappa \wedge j \in \omega \}$.
    \item[(ii)] $\{ r_{\alpha} \mid  \alpha < \kappa \}$ is a set of mutually disjoint rays of $G$.   
    \item[(iii)]  $\{p^{\alpha}_{n} \mid n \in \omega \wedge \alpha \in \kappa\} $ is a family of $\{r_\alpha \mid \alpha < \kappa\}$-independent paths such that for each $\alpha \in \kappa$ and each $n \in \omega$ the path $p^{\alpha}_{n}$ connects $r_0$ to $r_{\alpha}$.
   \item[(iv)] Given $\alpha \in \kappa$, $\{p^{\alpha}_{n} \mid n \in \omega\}$ is a set of mutually disjoint paths.
\end{itemize} We will call $r_{0}$ the \textit{center} of the $\kappa$-star and for $\alpha > 0, ~ r_{\alpha}$  will be called a \textit{leaf ray} of $S$. 
\end{defn}

We need the following results from \cite{geschke2023halin}:

 \begin{combs}
     \hypertarget{Combs}{} \cite[Lemma 2.1]{geschke2023halin}  Let $\varepsilon$ be an end of a graph $G$ and $\mathcal{R}$ be a $\kappa$-sized collection of disjoint rays belonging to $\varepsilon$. If $U$ is a countable set of vertices, and $\mathcal{C}$ is an uncountable collection of internally disjoint $(\varepsilon$, $U)$-combs of $G$, then $\varepsilon$ contains a $|\mathcal{C}|$-star of rays whose leaf rays are the spines of (a subset of) combs in $\mathcal{C}$. 

 \end{combs}
 \begin{greedy}
 \hypertarget{The Greedy Lemma}{} \cite[Lemma 2.2] {geschke2023halin}  Let $\varepsilon$ be an end of a graph $G$ and $\mathcal{R}$ be a $\kappa$-sized collection of disjoint rays belonging $\varepsilon$. If $\mathrm{cf}(\kappa)>\aleph_{1}$, then there exists a set of vertices $U$ of $G$, and a $\kappa$-sized collection $\mathcal{C}$ of internally disjoint $(\varepsilon$, $U)$-combs of $G$, such that $|U|<\kappa$ and all spines of $\mathcal{C}$ are in $\mathcal{R}$. 
     
 \end{greedy}

We are ready to prove Theorem \ref{thm_strong}: 
\begin{thm} \label{thm_strong} Let $G$ be a graph with an end $\varepsilon$ such that $\deg(\varepsilon)=\kappa$, a regular $\omega$-inaccessible cardinal. For every $\mathcal{R}\subseteq \varepsilon$, family of mutually disjoint rays, such that $|\mathcal{R}|=\kappa$, there is a pair $(\mathcal{R}_{0},\mathcal{P}_{0})$, such that \begin{itemize}
    \item $\mathcal{R}_{0} \subseteq \mathcal{R}$.
    \item $|\mathcal{R}_{0}|=\kappa$.
    \item $\mathcal{P}_{0}$ is a family of $\mathcal{R}_{0}$-independent paths.
    \item $(\mathcal{R}_{0},\mathcal{P}_0)$ is a $\kappa$-star of rays.
\end{itemize}
In particular, $\HC(\kappa)$ holds for all $\omega$-inaccessible regular cardinals $\kappa$.
\end{thm} 
\begin{proof}      Let $G=(V,E)$ be a graph with an end $\varepsilon$ of degree $\kappa$. Let $\mathcal{R} \subseteq \varepsilon$ be a family of pairwise disjoint rays such that $|\mathcal{R}|=\kappa$. We will find $(\mathcal{R}_{0},\mathcal{P}_{0})$ a ray graph with for $\varepsilon$ with $\mathcal{R}_{0} \subseteq \mathcal{R}$. 

 Since $\kappa  > \omega_{1}$ and $\mathrm{cf}(\kappa)=\kappa$, we can apply the \hyperlink{The Greedy Lemma}{Greedy Lemma} to $\mathcal{R}$ and find $U \subseteq V$ and $\mathcal{C}$ such that $|U| = \lambda$ for some $\lambda < \kappa$, $|\mathcal{C}|=\kappa$ and $\mathcal{C}$ is a collection of internally disjoint $(\varepsilon, U)$-combs in $G$ with all spines in $\mathcal{R}$.

We have two cases to consider: $|U|=\aleph_0$ and $|U|>\aleph_0$. If $|U|=\aleph_{0}$, then, we can apply   \hyperlink{Combs}{Combs Lemma} to $U$ and $\mathcal{C}$, and obtain a $\kappa$-star of rays whose leaf rays are spines of combs in $\mathcal{C}$. Since a $\kappa$-star of rays is a  ray graph of cardinality $\kappa$ we have found the $(\mathcal{R}_{0},\mathcal{P}_{0})$ we sought. 

Suppose $|U| > \aleph_{0}$. Consider the  $(\lambda,\kappa)$-graph $(U, \mathcal{C}, E)$ defined as follows:  $(v, C) \in E$ if, and only if, $v \in \teeth{C}$.   If $|U| > \aleph_{0}$, setting $\lambda= |U|$, using that $\kappa$ is a regular cardinal that is $\omega$-inaccessible, it follows by (1) of Lemma \ref{bipartite1} applied to the bipartite graph $(U,\mathcal{C},E)$ that there is  $(U',\mathcal{C}',E')$ a $(\aleph_0,\kappa)$-subgraph of $(U,\mathcal{C},E)$. It follows that $U'$ is a countable set of vertices and $\mathcal{C}'$ is a set of pairwise internally disjoint $(\varepsilon,U)$-combs.

For each comb $C \in \mathcal{C}'$ let $P_{C}$ be the comb obtained by removing the paths $p$ from $C$ that have as end points some element that is not in $U'$. Let $\mathcal{C}''=\{P_{C} \mid C \in \mathcal{C}'\}$. Then, we can apply \hyperlink{Combs}{Combs Lemma} to $U'$ and $\mathcal{C}''$ and obtain a $\kappa$-star of rays whose leaf rays are spines of combs of $\mathcal{C}''$. We found the $(\mathcal{R}_{0},\mathcal{S}_{0})$ that we sought. 
\end{proof}

Theorem \ref{thm_strong} together with Proposition \ref{Prop_HCsucc} provide a positive answer for  \cite[Question 1)]{geschke2023halin}: Is Halin's conjecture true for any $\kappa > \aleph_{\omega_{1}}$ such that $HC(\kappa)$?

\begin{cor}\label{Corollary_functiondegree}
    For every cardinal $\kappa$ there is $f(\kappa) \geq \kappa$ such that   every end $\varepsilon$ of degree $f(\kappa)$ contains a connected ray graph of size $\kappa$
\end{cor}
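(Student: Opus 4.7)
The plan is to set $f(\kappa) := (\kappa^{\omega})^{+}$ and feed this directly into Corollary \ref{HCsucc}. Since $(\kappa^{\omega})^{+} > \kappa$ for any cardinal $\kappa$, the requirement $f(\kappa) \geq \kappa$ is immediate, and by Corollary \ref{HCsucc} (applied with $\mu = \kappa$ and $n = 1$) the conjecture $\mathrm{HC}(f(\kappa))$ holds.

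Given any end $\varepsilon$ of degree $f(\kappa)$, applying $\mathrm{HC}(f(\kappa))$ produces a pair $(\mathcal{R}, \mathcal{P})$ with $\mathcal{R} \subseteq \varepsilon$ of cardinality $f(\kappa)$ whose associated graph $(\mathcal{V}, \mathcal{E})$ is connected; this is a ray graph of size $f(\kappa) \geq \kappa$ contained in $\varepsilon$. The remaining step is to cut down to size exactly $\kappa$. The plan here is to pick any $r_{0} \in \mathcal{V}$ and, using the connectedness of $(\mathcal{V}, \mathcal{E})$, build by recursion of length $\kappa$ a vertex subset $\mathcal{R}' \subseteq \mathcal{V}$ of cardinality $\kappa$, closing off at each stage under a finite path witnessing connectivity to the current set. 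The induced subgraph of $(\mathcal{V}, \mathcal{E})$ on $\mathcal{R}'$ is then connected, and since it coincides with the $(\mathcal{R}', \mathcal{P})$-associated graph, it is itself a connected ray graph of size $\kappa$ contained in $\varepsilon$.

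There is essentially no obstacle: all the substantive work has already been carried out to establish Corollary \ref{HCsucc}. The only mild observation required is that restricting a ray graph to a connected vertex subset yields another ray graph, which is immediate from the definition of the $(\mathcal{R}', \mathcal{P})$-associated graph as an induced subgraph of the original. If anything, the only choice to be made is cosmetic: one could equally well take $f(\kappa) = \mathfrak{c}^{+}$ when $\kappa \leq \mathfrak{c}$ and $f(\kappa) = (\kappa^{\omega})^{+}$ otherwise, but the uniform definition above covers every case at once.
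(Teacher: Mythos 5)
Your proposal is correct and follows essentially the same route as the paper: take $f(\kappa)=(\kappa^{\omega})^{+}$ and invoke Corollary \ref{HCsucc} to obtain a connected ray graph of size $f(\kappa)$ in any end of that degree. The only difference is the final cut-down from $f(\kappa)$ to $\kappa$: the paper passes through the star-of-rays reformulation (Lemma 3.1 of Geschke--Kurkofka--Melcher--Pitz) and simply drops leaf rays, while you extract a connected induced subgraph of size $\kappa$ from the associated graph by a closure recursion; both versions of this routine step are valid.
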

\begin{proof}
    Given a cardinal $\kappa$, if $\kappa = (\mu^{\omega})^{+}$ for some cardinal $\mu$, then by Proposition \ref{Prop_HCsucc} and Theorem \ref{thm_strong}, we can set $f(\kappa)=\kappa$. If $\kappa$ is not of the form $(\mu^{\omega})^{+}$, let $f(\kappa) = (\kappa^{\omega})^{+}$. Again, by Proposition \ref{Prop_HCsucc} and Theorem \ref{thm_strong}, for every end $\varepsilon$ with $\deg(\varepsilon)=f(\kappa)$ we can find a connected ray graph of cardinality $f(\kappa)$ contained in $\varepsilon$. %By \cite[Lemma 3.1]{geschke2023halin} we can find a $f(\kappa)$-star of rays contained in $\varepsilon$, in particular, we can find a $\kappa$-star of rays contained in $\varepsilon$. 
\end{proof}

Corollary \ref{Corollary_functiondegree} provides a positive answer to the following question: \cite[Question 3) ]{geschke2023halin} Is it true that for every cardinal $\kappa$ there is $f(\kappa) \geq \kappa$, such that for every end $\varepsilon$ of degree $f(\kappa)$ contains a  ray graph of size $\kappa$ ?

The following lemma will be used in the singular cardinals case in the proof of Theorem \ref{main}.  

\begin{thm}\label{LemmaHCsingular} Suppose that $\kappa$ is an $\omega$-inaccessible cardinal and that $cf(\kappa)=\omega$ or $cf(\kappa)$ is an $\omega$-inaccessible cardinal. Then $\HC(\kappa)$ holds.
\end{thm}

\begin{proof} Given a graph $G=(V,E)$ with an end $\varepsilon$ such that $\mathrm{deg}(\varepsilon)=\kappa$, let $\mathcal{F} \subseteq \varepsilon$ be a set of pairwise disjoint rays such that $|\mathcal{F}|=\kappa$.

We consider two cases: $\cf(\kappa)=\omega$ and $\cf(\kappa) > \omega_{1}$. We start with the case $\cf(\kappa)=\omega$. Using that $\kappa$ is $\omega$-inaccessible we can fix $\langle \mu_{n} \mid n < \omega  \rangle$ an increasing sequence of cardinals that is cofinal  in $\kappa$ such that for each $n <\omega$ it holds that $(\mu_{n})^{\omega}=\mu_{n}$. By Proposition \ref{Prop_HCsucc} and Theorem \ref{thm_strong}, for each $n < \omega = \cf(\kappa)$, it is possible to find $S_n=(G_n,\{r_{\alpha}^{n} \mid \alpha < \mu_{n}^{+}\},\{p^{n}_{\alpha,j} \mid \alpha < \mu_{n}^{+} \wedge k \in \omega \})$ a $\mu_{n}^{+}$-star of rays such that $\{r_{\alpha}^{n} \mid \alpha < \mu_{n}^{+}\} \subseteq \mathcal{F}$ is a set of mutually disjoint rays and a set $\mathcal{P}_{n}=\{p^{n}_{\alpha,j} \mid \alpha < \mu_{n}^{+} \wedge k \in \omega \}$ a set of  $\{r_{\alpha}^{n} \mid \alpha < \mu_{n}^{+}\}$-independent paths such that $((\bigcup S_{n}, E \cap \left[\bigcup S_n\right]^{2}), S_{n},\mathcal{P}_{n})$ is a $\mu_{n}^{+}$-star of rays with center $r_{0}^{n}$. Notice that  for each $\alpha$ and each $n \in \omega$ the set $C_{\alpha}^{n}=r_{\alpha}^{n}\cup\{p^{n}_{\alpha,j} \mid j \in \omega\}$ is a comb, and $\{C^{n}_{\alpha} \mid \alpha < \kappa \}=\mathcal{C}$ is a set of pairwise internally disjoint $(\varepsilon, \{r^{n}_{\alpha} \mid \alpha < \mu^{+}_{n} \}) $-combs. 

Since $|\bigcup_{k < n } S_{k} \cup \bigcup_{k < \omega} R_{k}|\leq \max\{ \mu^{+}_{k} \mid k < n \} = \mu^{+}_{n-1}$, there is a set $\mathcal{D}_{n} =\{C^{n}_{\alpha_\delta}\mid \delta \in \mu_{n}^{+}\}\subseteq \mathcal{C}$  of cardinality $\mu_{n}^{+}$ such that  for every $\delta \in \mu_{n}^{+}$ the interior of  $C_{\alpha_{\delta}}^{n}$ is disjoint from $\bigcup_{k < n} S_{k} \cup \bigcup_{k < \omega} R_{k}$. 

Hence, $\bigcup_{n \in \omega} \mathcal{D}_{n}$ is a set of  $(\varepsilon,\bigcup_{k \in \omega} R_{k})$-combs of cardinality $\kappa$ which elements are pairwise internally disjoint. Applying the  \hyperlink{Combs}{Combs Lemma} we obtain a $\kappa$-star of rays in $G$.

Next, suppose $\cf(\kappa) \neq \omega$. Then $\omega_1 \leq \omega^{\omega} < \cf(\kappa)$. Let $\varepsilon$ be an end of $G$ with $\deg(\varepsilon) =  \kappa$, by the \hyperlink{The Greedy Lemma}{ Greedy Lemma}, there are some set of vertices $U \subset V(G)$ with $|U| < \kappa$ and a $\kappa$-sized family $\mathcal{C}$ of internally disjoint $(\varepsilon,U)$-combs. 

Consider $H=(U,\mathcal{C},E)$, the bipartite $(|U|,\kappa)$-graph defined as follows: $\{z,C\}\in E$ if and only if $z \in \mathcal{T}(C)$ and $C \in \mathcal{C}$. By item (4) of Lemma \ref{bipartite1}, $H$ contains either an $(\aleph_0,\kappa)$-subgraph or a collection of disjoint $(\aleph_0,\theta_{\beta})$-subgraphs for $\{\theta_{\beta} \mid \beta < \cf(\kappa)\}$ cofinal in $\kappa$ with all $\theta_{\beta} > \max \{\aleph_{\omega+1}, \cf(\kappa)\}$ regular and $\omega$-inaccessible. 

If $H$ contains an $(\aleph_0,\kappa)$-subgraph, $(A,B)$, then every comb $C \in B$ is such that $|\mathcal{T}(C)\cap A|=\aleph_0$. For each comb $C = S \cup \bigcup D$ that is in $B$ let $C'$ be the comb obtained by deleting the paths $p \in D$ such that $p \cap A = \emptyset$. Then we can apply \hyperlink{Combs}{Combs Lemma} to $A$ and $\{C' \mid C \in B \}$ and obtain a $\kappa$-star of rays. This gives a  ray graph  as we sought.

Next, suppose we are in the case that $H$ does not contain an $(\aleph_0,\kappa)$-subgraph. So there is a collection $\{(A_{\beta},B_{\beta}) \mid \beta < \cf(\kappa) \}$ of disjoint $(\aleph_0,\theta_{\beta})$-subgraphs for $\{\theta_{\beta} \mid \beta < \cf(\kappa)\}$  a cofinal sequence in $\kappa$ such that all $\theta_{\beta}$ are greater than $ \max \{\aleph_{\omega+1}, \cf(\kappa)\}$, regular and $\omega$-inaccessible. 

Let $H_{\beta}=(A_{\beta},B_{\beta},E)$ be a $(\aleph_0, \theta_{\beta})$-subgraph of $(U,\mathcal{C},E)$. Similar to what we did in the previous case, for each $C \in B_{\beta}$, $C=S \cup \bigcup D$, let $C'$ be the comb obtained by deleting the paths $p \in D$ such that $p \cap A_{\beta}=\emptyset$. We can then apply \hyperlink{Combs}{Combs Lemma} to $A_{\beta}$ and $\{C' \mid C \in B_{\beta}\}$ and find a $\theta_{\beta}$-star of rays $S_{\beta}$. 

For each $\beta < \cf(\kappa)$, let $S_{\beta}$ be a $\theta_{\beta}$-star of rays obtained as above. Let $r_{0}^{\beta}$ be the center of $S_{\beta}$ and $r_{\alpha}^{\beta}$ be its leaf rays for $0 < \alpha < \theta_{\beta}$ and $\beta <\cf(\kappa)$. By Theorem \ref{thm_strong}, we have that $\HC(\cf(\kappa))$ holds, and we can obtain a $cf(\kappa)$-star of rays $S$ of size $\cf(\kappa)$ with leaf rays in $\{ r_{0}^{\beta} \mid \beta < \cf(\kappa)\}$.  Since $|S| < |S_{\beta}|$  for all $\beta < \cf(\kappa)$, shrinking $S_{\beta}$ if necessary, we may assume that each $S_{\beta}\cap S = \{r_0^{\beta}\}$ for each $\beta < \cf(\kappa)$. Then $S \cup \bigcup \{S_{\beta} \mid \beta < \cf(\kappa) \wedge r_0^{\beta} \in S\}$ yields a connected ray graph of size $\kappa$.  
\qedhere
\end{proof}

\section{$\HC$ on an interval-partition of the cardinals \label{Section_Resh_Thm}}

In this section, we prove a negative result regarding $\HC(\kappa)$ and use a cardinal function $\resh$ determine when $\HC$ holds or fails. The function $\resh$ helps tracking certain $\omega$-inaccessible cardinals. 

\begin{lemma}\label{Lemma_nosubgraph}
    Suppose  $\lambda$ and $\kappa$ are cardinals,  $\lambda$ is $\omega$-inaccessible and $\lambda < \kappa \leq \lambda^{\omega}$. There is a $(\lambda,\kappa)$-graph $(T,X,E)$ such that: $T\subseteq \lambda^{<\omega}$, $X \subseteq \lambda^{\omega}$,   $(T\cup X,\subseteq)$ is a tree and for all regular cardinals $\kappa'$ with $\lambda < \kappa' \leq \kappa$ the $(\lambda,\kappa)$-graph $(T,X,E)$ has no $(\aleph_{0},\kappa')$-subgraph.
\end{lemma}
\begin{proof} Let $X \subseteq \lambda^{\omega}$ such that $|X|=\kappa$ and let $$T=\{ s \in \lambda^{<\omega} \mid \exists n \exists f ( n \in \omega \wedge f \in X \wedge s=f\restriction n )\}$$
We have that $T \subseteq \lambda^{<\omega}$, then $|T| \leq \lambda$. Since $\lambda$ is $\omega$-strong, if $|T| < \lambda$, then the set $$[T]=\{f \in \lambda^{\omega} \mid \forall n ( n \in \omega \rightarrow f\restriction n \in T) \}$$ would have cardinality $< \lambda$ contradicting the fact that $X \subseteq [T]$  and $\lambda < \kappa = |X|$. Then $|T|=\lambda$. 

Let $(T,X,E)$ be a bipartite graph where $\{s,f\} \in E$ if and only if there is $n \in \omega$ such that $s=f\restriction n$. It follows that $(T,X,E)$ is a $(\lambda,\kappa)$-graph. 

Suppose that $A \subseteq T$ and $|A|=\aleph_{0}$. If $(A,B)$ is a subgraph of $(T,X)$ then $B \subseteq \{f \in \lambda^{\omega} \mid (\{ n \in \omega \mid f\restriction n \in A\} \text{ is infinite }) \}$. From the hypothesis that $\lambda$ is $\omega$-strong it follows that the set $[A]=\{f \in \lambda^{\omega} \mid (\{ n \in \omega \mid f\restriction n \in A\} \text{ is infinite }) \}$ has cardinality less or equal to $|A|^{\omega}| $, which is strictly smaller than $\lambda$. Therefore, for every $\kappa'$ such that $\lambda < \kappa' \leq \kappa$ it follows that  $(A,B)$ is not a $(\aleph_0,\kappa')$-subgraph of $(T,X)$. Hence $(T,X,E)$ is the $(\lambda,\kappa)$-graph we sought.  \qedhere

\end{proof}

The counter examples to $\HC(\kappa)$ that we construct will follow the methods developed in \cite{geschke2023halin}. These counterexamples will be obtained as inflations of  $T$-graphs, where $T$ is a tree. To this end, we introduce the notions of $T$-graphs and inflations of $T$-graphs below:
\begin{defn} \label{def_Tgraph}
    A tree $T$ is \emph{normal} in a graph $G$, if $V(G) = T$
and the two end-vertices of any edge of $G$ are comparable in~$T$. We call $G$ a
\emph{$T$-graph} if $T$ is normal in $G$ and the set of lower neighbors of
any point $t$ is $<_{T}$-cofinal in $\{t' <_{T} t \mid t' \in T \}$. We say that $G$  is a \emph{sparse $T$-graph} if for every limit $t \in T$ the set $\{t' <_{T} t \mid t' \in T \} \cap N(t)$ has order-type $\omega$. 
\end{defn}
    \begin{defn}
\label{def_rayinflation}
Let $G$ be a sparse $T$-graph for an order tree $T$ of height at most $\omega_1$. The \emph{ray-inflation} $G \sharp \mathbb{N}$ of $G$ is the graph with vertex set $T \times \mathbb{N}$, and the following edges: 
\begin{enumerate}
	\item For every $t \in T$ we add all the edges $(t,n)(t,n+1)$ with $n\in\mathbb{N}$ so that $R_t:=G[\,\{t\}\times\mathbb{N}\,]$ is a \emph{horizontal ray}.
	\item If $t \in T$ is a successor with predecessor $t'$ in the tree order $<_{T}$, we add all edges $(t,n)(t',n)$ for all $n \in \mathbb{N}$. 
	\item If $t \in T$ is a limit with down-neighbors $t_0 <_T t_1 <_T t_2 <_T \ldots$ in $G$ we add the edges $(t,n)(t_n,n)$ for all $n \in \mathbb{N}$.
\end{enumerate}
\end{defn}

The following lemma about ends of inflations will be very used to construct counter examples to Halin's conjecture: 

\begin{lemma}
\label{lem_ray_inflation_endstructure}
If $G$ is a sparse $T$-graph for $T$ an order tree of height at most $\omega_1$, then all the pairwise disjoint horizontal rays $R_t$ in the ray inflation $H=G \sharp \mathbb{N}$ belong to the same sole end $\varepsilon$ of $G \sharp \mathbb{N}$; in particular, $\deg(\varepsilon)=|T|$.
\end{lemma}

We include the following lemma which is a weaker version of \cite[Lemma 3.1]{geschke2023halin} formulated in a way that is more convenient for our purposes.

\begin{lemma} \label{lemma_stars}
    Let $\varepsilon$ be an end of a graph $G$. Let $\lambda < \kappa$ be infinite cardinals. Suppose that there is a pair $(\mathcal{R},\mathcal{P})$ such that
    \begin{itemize}
             \item[(a)] $\mathcal{R}$ is a set of mutually disjoint rays of $G$ such that $\mathcal{R}\subseteq \varepsilon$.
    \item[(b)] $\mathcal{P}$ is a set of $\mathcal{R}$-independent paths of $G$.
    \item[(c)] $|\mathcal{R}|=\kappa$. 
    \item[(d)] The $(\mathcal{R},\mathcal{P})$-associated $(\mathcal{V},\mathcal{E})$  is connected.
    \end{itemize}
    Then for every regular cardinal $\kappa'$ such that $\lambda < \kappa' \leq \kappa$ there is a $\kappa'$-start of rays in $\varepsilon$.
\end{lemma}
\begin{proof}
    Since $(\mathcal{V},\mathcal{E})$ is connected we can fix a vertex $v_{0} \in \mathcal{V}$ and write $\mathcal{V}=\bigcup_{n \in \omega} U_{n}$ where $U_{0}=\{v_{0}\}$ and $U_{n+1}=\{w \in \mathcal{V} \mid \exists s ( s \in U_{n} \wedge \{w,s\} \in \mathcal{E} \}$ for $n \in \omega$.

    Let $\kappa' \leq \kappa$ be a regular cardinal. We will find $v \in \mathcal{V}$ such that $|\{ w \in \mathcal{V} \mid \{w,v\} \in \mathcal{E}\}| \geq \kappa'$.
    Suppose otherwise, we will reach a contradiction showing that $|\mathcal{V}| < \kappa'$.  
    We prove by induction that for every $n \in \omega$ we have $|U_{n}| < \kappa'$. For $n=0$ we have $|\{v_0\}|=|U_{0}|=1$. Suppose that for a given $n \in \omega$ it holds that $|U_{n}| < \kappa'$. We have that $U_{n+1}=\bigcup_{v \in U_{n}} \{ w \in \mathcal{V} \mid \{w,v\} \in \mathcal{E})\}$, therefore $|U_{n+1}| < \kappa'$ and this concludes the induction. Then $|\mathcal{V}|< \kappa'$, which is a contradiction. Hence there is $v \in \mathcal{V}$ such that $|\{ w \in \mathcal{V} \mid \{w,v\} \in \mathcal{E}\}|\geq \kappa'$.

    Let $\theta = |\{ w \in \mathcal{V} \mid \{w,v\} \in \mathcal{E}\}|$, then it forms a $\theta$-star of rays $S$ and we can find a subset of $S'\subseteq S$ which is a $\kappa'$-star of rays. 
    \end{proof}
\begin{lemma} \label{lemma_counterexample} 
If $\lambda$ and $\kappa$ are cardinals, $\lambda$ is an $\omega$-inaccessible cardinal with $\cf(\lambda)=\omega$ and $\lambda < \kappa \leq \lambda^{\omega}$, then there is a graph $G$ with an end $\varepsilon$ such that $\deg(\varepsilon)=\kappa$ and for all regular cardinals $\kappa'$ such that $\lambda < \kappa' \leq \kappa$ there is no pair $(\mathcal{R},\mathcal{P})$ such that
\begin{itemize}
    \item[(a)] $\mathcal{R}$ is a set of mutually disjoint rays of $G$ such that $\mathcal{R}\subseteq \varepsilon$.
    \item[(b)] $\mathcal{P}$ is a set of $\mathcal{R}$-independent paths of $G$.
    \item[(c)] $|\mathcal{R}|=\kappa'$. 
    \item[(d)] The $(\mathcal{R},\mathcal{P})$-associated $(\mathcal{V},\mathcal{E})$  is connected. 
\end{itemize}
In particular, $HC(\kappa)$ fails.
\end{lemma}
    \begin{proof} Let $J$ be the $(T,X)$ be a $(\lambda,\kappa)$-graph given by Lemma \ref{Lemma_nosubgraph} , where $(T \cup X,\subseteq)$ is a sub-tree of $\lambda^{\leq \omega}$. Let $G$ be a sparse $(T\cup X)$-graph and let $H$ be the inflation of $G$.
     It follows by \cite{geschke2023halin} that has a sole end $\varepsilon$ such that $\deg(\varepsilon)=\kappa'$. Fixed $\kappa'$ a regular cardinal such that $\lambda < \kappa' \leq \kappa$, we will prove that there is no $(\mathcal{R},\mathcal{P})$ satisfying (a) to (d). 

     Towards a contradiction, suppose that there is $(R,P)$ that satisfies (a) to (d).  

     By Lemma \ref{lemma_stars} we may assume that $(\mathcal{R},\mathcal{P})$ contains $S$ a $\kappa''$-star of rays for some $\kappa'' > \lambda$ such that $\kappa'' \leq \kappa'$ and $\kappa''$ is a regular cardinal.  Denote by $r_0$ the center ray of $S$ and $\langle r_i \mid 0 < i < \kappa'' \rangle$ the leaf rays of $S$. 

     Let $\overline{A} =\{ s \in T \mid \exists n \exists v  \left(n \in \omega \wedge v \in T \cup X  \wedge (v,n)    \in r_0 \wedge s <_{T} v \right) \}$. The set $\overline{A}$ is countable and $|(T \setminus \overline{A})\times \mathbb{N}|=\lambda$. 

    Since the elements of $S$ are mutually disjoint and $|T\times \mathbb{N}|= \lambda < \kappa'$, then there are $\kappa''$ rays in $S$ which are  of the form $\{(v,n) \mid n \in \mathbb{N} \wedge n > k_0\}$ for some $v \in X$ and some $k_0 \in \mathbb{N}$. So we may assume that all leaf rays of $S$ are a tail end of some ray of the form $\{(v,n) \mid n \in \mathbb{N}\}$ for some $v \in X$. 

     Since $(T,X)$ has no $(\aleph_0,\kappa'')$-subgraph, it follows that $|\{v \in X \mid  |N_{J}(v) \cap \overline{A}| \text{ is infinite } \}| < \kappa''$. Then the set $|\{ v \in X \mid \exists k_0 (\{(v,n) \mid n \in \mathbb{N} \wedge n > k_0\} \in S) \wedge N_{J}(v) \cap \overline{A} \text{ is finite } \}|=\kappa''$, i.e. the set of $v \in X$ such that some tail end of $\{v\}\times\mathbb{N}$ is a ray of $S$ and $N_{J}(v)\cap \overline{A}$ is finite has cardinality $\kappa''$. 

     If $N_{J}(v)\cap \overline{A}$ is finite, and for some $k \in \omega$ the ray $\{(v,n) \mid n > k \}=r$ is a leaf ray of $S$, then  $N_{H}(r)\cap (\overline{A}\times \mathbb{N})$ is also finite. Therefore there are infinitely many paths $p \in \mathcal{P}$ connecting $r$ to $r_0$ such that $p \cap \left((T \setminus \overline{A})\times \mathbb{N})\right) \neq \emptyset$. 

     Thus there are $\kappa''$ $\mathcal{R}$-independent paths with internal points in $(T \setminus \overline{A})\times \mathbb{N})$, which is impossible since $|(T \setminus \overline{A})\times \mathbb{N})|=\lambda < \kappa''$. Hence there can be no such $(\mathcal{R},\mathcal{P})$ satisfying (a) to (d). \qedhere

    \end{proof}
In the next lemmas we prove a few basic facts about the $\resh$ function.
\begin{lemma} \label{lemma_reshprop1}
    Let $\kappa > \mathfrak{c}$, then one of the following hold: 
    \begin{itemize}
        \item[(a)] there is a $\gamma$ such that  $\kappa \in \bigl (\resh(\gamma),\resh(\gamma+1)\bigr ]$

        \item[(b)] there is a $\gamma$ such that  $\kappa =\resh(\gamma)$, $\gamma$ is a limit ordinal such that $\cf(\gamma) = \omega $ or $\cf(\gamma)$ is an $\omega$-inaccessible cardinal.
        \item[(c)] there is a $\gamma$ such that  $\kappa=\resh(\gamma)$, $\gamma$ is a limit ordinal such that $\cf(\gamma)\neq \omega$ and $\cf(\gamma)$ is not an $\omega$-inaccessible cardinal.
        
        \end{itemize}
\end{lemma}
\begin{proof}
    Since $\resh:\Ord \rightarrow \Ord$ is an increasing function with $\resh(0)=\mathfrak{c}$, it follows that for any given cardinal $\kappa$ that is strictly greater than $\mathfrak{c}$,  there is a least $\gamma$ such that $\kappa \in \resh(\gamma+1)$. We consider two cases: First we assume that $\gamma$ is a limit ordinal. Then $\resh(\gamma)=sup_{\delta \in \gamma} \resh(\delta)$ and therefore, for every $\alpha < \resh(\gamma)$ there is a $\delta < \gamma$ such that $\alpha < \resh(\delta+1)$. From the minimality of $\gamma+1$ it follows that $\resh(\gamma)\leq \kappa$. If $\gamma=\delta+1$, then $\resh(\delta+1) \leq \kappa$ from the minimality of $\gamma$. Thus $\kappa \in [\resh(\gamma),\resh(\gamma+1)]$. If $\resh(\gamma) < \kappa$, then $\kappa \in (\resh(\gamma),\resh(\gamma+1)]$ and we are in case (a). If $\kappa=\resh(\gamma)$, then we are in case (b) or in case (c).

    If $\gamma=\delta+1$ for some $\delta$, from the minimality of $\gamma$ we have $\resh(\delta) \leq \kappa$, then $\kappa \in [\resh(\delta),\resh(\delta+1)]$. \qedhere

\end{proof}

\begin{lemma} \label{lemma_reshprop2}
\begin{itemize}
    \item[(a)] For every $\gamma$ we have that $\resh(\gamma+1)^{\omega}=\resh(\gamma+1)$.
    \item[(b)] For every limit $\gamma$ we have that $\resh(\gamma)$ is an $\omega$-inaccessible cardinal.
    \item[(c)] If $\gamma$ is not a limit ordinal with $cf(\gamma)=\omega$, then $\resh(\gamma)^{+n}$ is an $\omega$-inaccessible cardinal for every $n \in (\omega \cup \{\omega\}) \setminus \{0\}$ and $\resh(\gamma)^{+\omega} < \resh(\gamma+1)$.
\end{itemize}
\begin{proof}
    \begin{itemize}
        \item[(a)] We have by definition that either $\resh(\gamma+1)=(\resh(\gamma)^{+\omega})^{\omega}$ or $\resh(\gamma+1)=(\resh(\gamma))^{\omega}$. Since $((\resh(\gamma)^{+\omega})^{\omega})^{\omega}=(\resh(\gamma)^{+\omega})^{\omega}$ and $(\resh(\gamma)^{\omega})^{\omega}=(\resh(\gamma))^{\omega}$ (a) follows.
        \item[(b)] Let $\gamma$ be a limit ordinal and let $\mu$ be a cardinal such that $\mu < \resh(\gamma) = \bigcup_{\alpha \in \gamma}\resh(\alpha)$. Then, there is $\alpha < \gamma$ such that $\mu < \resh(\alpha) < \resh(\alpha+1) < \resh(\gamma)$. Therefore $\mu^{\omega} \leq \resh(\alpha+1)^{\omega} = \resh(\alpha+1) < \resh(\gamma)$. Thus $\resh(\gamma)$ is $\omega$-inaccessible.
        \item[(c)] We start proving that for $\gamma$ that is not limit with $cf(\gamma)=\omega$, it follows that $\resh(\gamma)^{\omega}=\resh(\gamma)$. If $\gamma=\delta+1$ for some $\delta$, this follows from (a). If $\gamma$ is a limit, $\cf(\gamma)\geq \omega_{1}$, hence every $x \in \resh(\gamma)^{\omega}$ belongs to some $\alpha^{\omega}$ for $\alpha \in \resh(\gamma)$. From the definition of $\resh(\gamma)$, there is $\delta \in \gamma$ such that $\alpha < \resh(\delta+1) < \resh(\gamma)$. From (a) we have $\alpha^{\omega} \leq \resh(\delta+1)^{\omega}=\resh(\delta+1) < \resh(\gamma)$. Therefore $|\resh(\gamma)^{\omega}| \leq |\bigcup_{\alpha \in \resh(\gamma)} \alpha^{\omega}| \leq \resh(\gamma)\times \resh(\gamma)=\resh(\gamma)$.

       By Proposition \ref{Prop_HCSucc2} it follows that $\resh(\gamma)^{+n}$ is an $\omega$-inaccessible cardinal for all $n \in (\omega \cup \{ \omega\})\setminus\{0\}$.    \end{itemize} \qedhere

\end{proof}
    
\end{lemma}

\begin{notation}
    If  $\kappa$ is not an $\omega$-inaccessible cardinal, we denote by $\theta(\kappa)$ the least cardinal $\theta < \kappa$ such that $\theta^{\omega} > \kappa$.

\end{notation}

  \begin{rmk}
      Notice that if $\alpha < \theta(\kappa)$, since $(\alpha^{\omega})^{\omega}  = \alpha^{\omega}   < \kappa$, it follows that $\alpha^{\omega} < \theta(\kappa)$. Moreover, $\cf(\theta(\kappa)) = \omega$, as otherwise,  we have $\theta(\kappa)^{\omega}=\theta(\kappa)$ from the fact that $\theta(\kappa)$ is an $\omega$-inaccessible cardinal. 

  \end{rmk}

We need to state one last theorem before proving our main result: 

\begin{thm}\cite[Theorem 9.1]{geschke2023halin} \label{Theorem_InflationCE}
    If $\kappa$ is a cardinal such that $ \HC(\cf(\kappa))$ fails, then $\HC(\kappa)$ fails.
\end{thm}

\begin{thm} \label{main1}

   Suppose $\kappa > \mathfrak{c}$ is any cardinal and that if there is  a limit ordinal $\gamma$ such that $\resh(\gamma)=\kappa$,  then $cf(\gamma) > \mathfrak{c}$. Then the following hold:
    \begin{itemize}

                \item[(a)] If $\kappa \in \bigl (\resh(\gamma),\resh(\gamma)^{+\omega} \bigr ]$, for some $\gamma$ with $cf(\gamma)\neq\omega$, then $\HC(\kappa)$ holds.
                \item[(b)] If $\kappa \in \bigl (\resh(\gamma)^{+\omega},\resh(\gamma+1)\bigr ]$, for some $\gamma$ with $\cf(\gamma)\neq\omega$, then $\HC(\kappa)$ fails.
                              \item[(c)] If $\kappa \in (\resh(\gamma),\resh(\gamma+1)]$ for some $\gamma$ such that $cf(\gamma)=\omega$, then $\HC(\kappa)$ fails. 
        
        \item[(d)] If  $\kappa =\resh(\gamma)$ is a singular cardinal such that $\cf(\gamma) = \omega $ or $\cf(\gamma)$ is an $\omega$-inaccessible cardinal, then $\HC(\kappa)$ holds.
        \item[(e)] If $\kappa=\resh(\gamma)$ is a singular cardinal such that $\cf(\gamma)\neq \omega$ and $\cf(\gamma)$ is not an $\omega$-inaccessible cardinal, then $\HC(\kappa) $ fails.
        
        \end{itemize}
\end{thm}

\begin{proof}

    Case  (a): From (c) of Lemma \ref{lemma_reshprop2} it follows that $\kappa$ is $\omega$-inaccessible. If $\kappa \in \bigl ( \resh(\gamma),\resh(\gamma)^{+\omega} \bigr )$, it is a successor cardinal, then $\kappa$ is regular. We can apply Theorem \ref{thm_strong} to $\kappa$ and obtain $\HC(\kappa)$. If $\kappa=\resh(\gamma)^{+\omega}$, since it is a is $\omega$-inaccessible and $\cf(\resh(\gamma)^{+\omega})=\omega$, it follows from Lemma \ref{LemmaHCsingular} that $\HC(\resh(\gamma)^{+\omega})$ holds. 
    
    Case  (b): If we let $\lambda = \resh(\gamma)^{+\omega}$, then $\lambda$ is an $\omega$-inaccessible cardinal with $\cf(\lambda)=\omega$. By the definition of $\resh$, we have $\resh(\gamma+1)=\lambda^{+\omega}$. Therefore, by Lemma \ref{lemma_counterexample}, it follows that $\neg \HC(\kappa)$. 
    
    Case (c): Analogous to the previous case, let $\lambda =\resh(\gamma)$, then $\lambda$ is an $\omega$-inaccessible cardinal and $cf(\resh(\gamma))=\omega$. By Lemma \ref{lemma_counterexample}, it follows that $\neg \HC(\kappa)$. 

Case  (d): From (b) of Lemma \ref{lemma_reshprop2} we have that $\resh(\gamma)$ is an $\omega$-inaccessible cardinal and by hypothesis $\cf(\resh(\gamma))=\omega$ or $\cf(\resh(\gamma))$ is an $\omega$-inaccessible cardinal. In both cases we may apply Lemma \ref{LemmaHCsingular} and it follows that $\HC(\kappa)$ holds.
   
    Case (e): Let $\mu=\theta(cf(\kappa))$, then $\mu$ is an $\omega$-inaccessible  cardinal. Then $\cf(\kappa) \in \bigl ( \mu, \mu^{\omega}\bigr ]$.  From Lemma \ref{lemma_counterexample}  we have that $\HC(\cf(\kappa))$ fails. By Theorem \ref{Theorem_InflationCE}, we have that $\HC(\kappa)$ fails. \qedhere

\end{proof}

%\begin{cor} \label{Cor_SingCase}   Suppose that $\kappa=\resh(\gamma)$ for some limit ordinal $\gamma$. If $\cf(\kappa)$ is not $\omega$-inaccessible, $\theta(\cf(\kappa))$ is not a fixed point of the $\aleph$-function and $\theta(cf(\kappa))>\mathfrak{c}$, then $\HC(\kappa)$ fails. \end{cor}
% \begin{proof}       Let $\mu=\theta(cf(\kappa))$, then $\mu$ is an $\omega$-inaccessible  cardinal. Then $\cf(\kappa) \in \bigl ( \mu, \mu^{\omega}\bigr ]$.  From Lemma \ref{Lemma_nosubgraph}  we have that $\HC(\cf(\kappa))$ fails. \end{proof}

\section{ $\SCH$ and its relation to $\HC$ \label{Section_Merimovich}}

In this section we discuss what values  the function $\resh $ assumes in certain models of $\ZFC$. The relation between $\HC$ and $\SCH$ and we also state some questions.%We also intend to conciliate  the result in \cite[Theorem 1]{geschke2023halin}, in particular item (4), with Theorem \ref{main1} and Theorem \ref{thm_strong}. 

\begin{rmk}
    Given $ \omega < \alpha < \omega_1 $, by results due to Magidor and Shelah \cite{GitikMagidor}, assuming large cardinals, it is consistent that \gch holds below $\aleph_{\omega}$ and $\aleph_{\omega}^{\omega}=\aleph_{\alpha+1}$. In this context we have $\resh(1)=\mathfrak{c} = \aleph_{1}$ and $\resh(2)=\aleph_{\omega}^{\omega}=\aleph_{\alpha+1}$. Applying Theorem \ref{main1} it follows that $\HC(\kappa)$ fails for every $\kappa \in \bigl ( \resh(1)^{+\omega},\resh(2)\bigr ]=(\aleph_{\omega},\aleph_{\omega}^{\omega}\bigr ]$ and $\HC(\kappa)$ holds for every $\kappa \in \bigl ( \resh(1),\resh(1)^{+\omega}\bigr ]=(\aleph_{1},\aleph_{\omega}\bigr ]$.
\end{rmk}

\begin{defn}
The \emph{Singular Cardinal Hypothesis above $\lambda$} states the following:
\begin{center}
$\SCH_{\geq \lambda }$: $ \mathfrak{c}  < \lambda$ and $\kappa^{\mathrm{cf}(\kappa)}=\kappa^{+}$ for every $\kappa > \lambda$ such that $2^{\mathrm{cf}(\kappa)} < \kappa$.    
\end{center}

\end{defn}

As a consequence of the next corollary, we will derive large cardinal strength from failures of $\HC$.
\begin{cor}\label{succSCH}
 If $\SCH_{\geq \lambda }$ holds, then $\HC(\kappa)$ holds for all successor cardinal $\kappa$ such that $ \kappa > \lambda$ and are not of the form $\mu^{+}$ for some $\mu $ such that $\mathrm{cf}(\mu)=\omega$.    
\end{cor}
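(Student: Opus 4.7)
The plan is to deduce $\HC(\kappa)$ from Lemma \ref{lemstrong} by showing that every $\kappa$ as in the statement is an $\omega$-inaccessible regular cardinal. Regularity is automatic since $\kappa=\mu^+$ is a successor; for $\omega$-inaccessibility we need $\nu^\omega<\kappa$ for every $\nu<\kappa$, and since $\nu\le\mu$ this reduces to the single cardinal arithmetic identity $\mu^\omega=\mu$.

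I would prove this identity by induction on cardinals $\mu\ge\lambda$ with $\cf(\mu)\neq\omega$. If $\mu=\nu^+$ is a successor, Hausdorff's formula gives $\mu^\omega=\mu\cdot\nu^\omega$, so it suffices to bound $\nu^\omega$. When $\cf(\nu)>\omega$ the inductive hypothesis yields $\nu^\omega=\nu$; when $\cf(\nu)=\omega$ and $\nu>\lambda$, the inequality $\mathfrak{c}<\lambda<\nu$ ensures $2^{\cf(\nu)}<\nu$, so $\mathrm{SCH}_{\geq\lambda}$ delivers $\nu^\omega=\nu^{\cf(\nu)}=\nu^+$. In either case $\nu^\omega\le\mu$. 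If $\mu$ is a limit cardinal with $\cf(\mu)>\omega$, every function $\omega\to\mu$ is bounded, hence $\mu^\omega=\sup_{\nu<\mu}\nu^\omega$, and the previous analysis applied pointwise gives $\nu^\omega\le\nu^+\le\mu$ for all $\lambda<\nu<\mu$. Observe that the case excluded by the corollary, namely $\kappa=\mu^+$ with $\cf(\mu)=\omega$, is exactly the scenario in which $\mathrm{SCH}_{\geq\lambda}$ forces $\mu^\omega=\mu^+=\kappa$; thus $\kappa$ fails to be $\omega$-inaccessible precisely in the excluded case, which is reassuring evidence that this is the right split.

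The delicate point will be the first step of the induction, where one has to absorb the contribution of $\lambda^\omega$ -- about which $\mathrm{SCH}_{\geq\lambda}$ says nothing directly. I would handle this either by starting the induction at $\mu>\lambda^+$ (so that $\lambda^\omega$ is swallowed by the factor $\mu$ in Hausdorff's formula, and, in the limit case, is dwarfed by the other summands $\nu^\omega$ for large $\nu<\mu$), or by exploiting $\mathfrak{c}<\lambda$ to bound $\lambda^\omega$ once an appropriate choice of $\lambda$ is made in the intended applications. Once $\mu^\omega=\mu$ is established for all $\mu\ge\lambda$ with $\cf(\mu)\neq\omega$, Lemma \ref{lemstrong} immediately yields $\HC(\kappa)$.
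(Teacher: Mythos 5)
Your route is the paper's route: the paper's entire proof is the observation that $\mu>\mathfrak{c}$, that $\mathrm{SCH}_{\geq\lambda}$ gives $\mu^{\omega}=\mu$, and that Lemma \ref{lemstrong} then yields $\HC(\mu^{+})$. The problem is precisely the point you flag as ``delicate'', and neither of your proposed repairs works. Since $\mu\geq\lambda$, you always have $\mu^{\omega}\geq\lambda^{\omega}$, and $\mathrm{SCH}_{\geq\lambda}$ as defined in the paper says nothing at all about $\lambda^{\omega}$: it only constrains cardinals strictly above $\lambda$, and the clause $\mathfrak{c}<\lambda$ does not bound $\lambda^{\omega}$. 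Starting the induction at $\mu>\lambda^{+}$ does not help, because unwinding Hausdorff's formula always bottoms out at the term $\lambda^{\omega}$, and the factor $\mu$ ``swallows'' it only when $\lambda^{\omega}\leq\mu$ --- which is exactly what is in question; likewise, in the limit case every summand $\nu^{\omega}$ with $\lambda\leq\nu<\mu$ is $\geq\lambda^{\omega}$, so the troublesome quantity is never dwarfed. What your induction actually proves is $\mu^{\omega}\leq\max(\mu,\lambda^{\omega})$ for all $\mu>\lambda$ with $\cf(\mu)>\omega$; hence you get $\mu^{\omega}=\mu$, and the $\omega$-inaccessibility of $\kappa=\mu^{+}$ needed for Lemma \ref{lemstrong}, only when $\mu\geq\lambda^{\omega}$.

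Moreover, this is not a defect you could have argued around, because the statement with the paper's literal definition of $\mathrm{SCH}_{\geq\lambda}$ needs that extra restriction: in a Magidor-style model with $\mathfrak{c}=\aleph_{1}$, \gch{} below $\aleph_{\omega}$, $2^{\aleph_{\omega}}=\aleph_{\omega+2}$ and \gch{} (hence $\mathrm{SCH}$) above, take $\lambda=\aleph_{\omega}$ and $\kappa=\aleph_{\omega+2}=\mu^{+}$ with $\mu=\aleph_{\omega+1}$, so $\cf(\mu)\neq\omega$ and all hypotheses of the corollary hold; yet there is an $\aleph_{\omega+2}$-scale for $\aleph_{\omega}$ (indeed $\resh(2)=(\aleph_{\omega})^{\omega}=\aleph_{\omega+2}$ here, so Theorem \ref{main1}(b) applies), and $\HC(\aleph_{\omega+2})$ fails by the Counter-Examples from Scales theorem. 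The paper's own proof glosses over the same lacuna by simply asserting $\mu^{\omega}=\mu$, so your analysis in fact isolates what must be added: the argument (yours and the paper's) is correct for successor cardinals $\kappa=\mu^{+}$ with $\mu\geq\lambda^{\omega}$, or under a hypothesis that also controls $\lambda^{\omega}$ (for instance full $\mathrm{SCH}$ above $\mathfrak{c}$, which gives $\lambda^{\omega}\leq\lambda^{+}$).
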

\begin{proof}
    Suppose that we have cardinals $\mu$ and $\kappa$ such that $\mu^{+}=\kappa > \lambda  \geq (2 ^{\omega})^{+}$ and $\mathrm{cf}(\mu)\neq \omega$. Then $\mu > 2^{\omega}$ and $\cf(\mu)>\omega$ by \cite[Theorem 5.22 (ii)(b)]{Jechbook}, $\SCH$ above $\lambda$ implies that $\mu^{\omega}=\mu$. Therefore, by Proposition \ref{Prop_HCSucc2} it follows that $\kappa$ is a regular $\omega$-inaccessible cardinal. Then, by Theorem \ref{thm_strong}, $\HC(\mu^{+})$ holds.
\end{proof}
The following corollary is an immediate consequence of Gitik's landmark result \cite{GITIKSCH} on the consistency strength of the negation of $\SCH$.
\begin{cor} \label{largecardinals}
    Suppose that $\HC(\kappa^{+})$ fails for some $\kappa^{+}$ such that $\mathrm{cf}(\kappa) \neq \omega$ and $\kappa^{+}>\mathfrak{c}$. Then there is an inner model $\mathcal{M}$ with a measurable cardinal $\theta$ such that $o^{\mathcal{M}}(\theta)=(\theta^{++})^{\mathcal{M}}$.
\end{cor}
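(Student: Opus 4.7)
The plan is to derive the conclusion as a direct consequence of Corollary \ref{succSCH} combined with Gitik's lower bound on the consistency strength of the failure of \textbf{SCH}. The whole argument is short because Gitik's theorem does all the heavy lifting; the only task on our side is to translate the failure of \textbf{HC} at a suitable successor into a failure of \textbf{SCH}.

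First I would set $\lambda := \mathfrak{c}^{+}$. Then by hypothesis we have $\mathfrak{c} < \lambda < \kappa^{+}$, and the cardinal $\kappa^{+}$ is a successor of a cardinal of uncountable cofinality which is not of the form $\mu^{+}$ with $\mathrm{cf}(\mu)=\omega$. The contrapositive of Corollary \ref{succSCH} therefore gives that $\mathrm{SCH}_{\geq \lambda}$ fails: there is a cardinal $\mu > \lambda$ with $2^{\mathrm{cf}(\mu)} < \mu$ such that $\mu^{\mathrm{cf}(\mu)} > \mu^{+}$. In particular, the classical Singular Cardinal Hypothesis fails outright.

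Step two is to quote Gitik's theorem \cite{GITIKSCH}, which states that the negation of $\mathrm{SCH}$ has consistency strength exactly that of the existence of a measurable cardinal $\theta$ with Mitchell order $o(\theta)=\theta^{++}$; the inner-model direction produces, via core model theory, an inner model $\mathcal{M}$ satisfying exactly this. Specializing this to the situation above yields the required $\mathcal{M}$ and $\theta$.

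The only real obstacle is external: Gitik's theorem is a deep result that we use as a black box, and verifying its statement is far outside the scope of this note. Internally, there is essentially no work beyond checking that the hypotheses $\mathrm{cf}(\kappa)\neq\omega$ and $\kappa^{+}>\mathfrak{c}$ are precisely what is needed to trigger Corollary \ref{succSCH}; once that is observed, the corollary is immediate.
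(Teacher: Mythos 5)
Your route is exactly the paper's: the paper offers no written proof, presenting the corollary as an immediate consequence of Corollary \ref{succSCH} (in contrapositive form) together with Gitik's theorem on the strength of $\neg\mathrm{SCH}$, which is precisely what you do. One small slip to fix: with $\lambda=\mathfrak{c}^{+}$ the inequality $\lambda<\kappa^{+}$ is \emph{not} given by the hypothesis, since $\kappa^{+}>\mathfrak{c}$ only yields $\kappa\geq\mathfrak{c}$, and when $\kappa=\mathfrak{c}$ you have $\kappa^{+}=\lambda$, so Corollary \ref{succSCH} (which needs $\kappa^{+}>\lambda$) does not apply. That case should be dispatched separately: if $\kappa=\mathfrak{c}$ then $\kappa^{+}=\mathfrak{c}^{+}$ is a regular $\omega$-inaccessible cardinal, so $\mathrm{HC}(\mathfrak{c}^{+})$ holds by Lemma \ref{lemstrong} (or Corollary \ref{HCsucc}) and the hypothesis of the corollary is vacuous; hence one may assume $\kappa>\mathfrak{c}$, after which your argument goes through verbatim. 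With that one-line remark added, the proof is correct and coincides with the paper's intended argument.
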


Note that if $\kappa$ is $\omega$-inaccessible, then $\HC(\kappa)$ is not affected by $\kappa^{\omega}$. On the other hand, in this case, $\kappa^{\omega}$ will affect $\HC(\kappa^{+n})$ depending on whether $\kappa^{\omega} \geq \kappa^{+n}$.

%\begin{cor}   Suppose $\SCH$ holds above $\kappa$. If $\mu >\kappa^{+}$ and $\mathrm{cf}(\mu) $ is not of the form $\theta^{+}$ with  $\mathrm{cf}(\theta)=\omega$, then $\HC(\mu)$ holds.  \end{cor}

\begin{cor}\label{merimovich} Suppose there is a class of strong cardinals, then given $n \in \omega$ it is consistent that for every cardinal $\mu$ it holds that $\HC(\mu^{+\omega+k})$ fails for every $k \leq n$. 
\end{cor}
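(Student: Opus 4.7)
The plan is to realize the conclusion in a class-generic extension obtained from Merimovich's extender-based Prikry--Magidor class forcing, exploiting the class of strong cardinals to blow up $\omega$-powers uniformly across the cardinal hierarchy.

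First, starting from a ground model with a proper class of strong cardinals, I would invoke Merimovich's class forcing to produce a generic extension $V[G]$ in which, for every singular cardinal $\lambda$ of cofinality $\omega$ of the form $\mu^{+\omega}$, one has $\lambda^{\omega}\geq\lambda^{+n+1}$. The existence of such a forcing is the standard consequence of the large-cardinal hypothesis; the key feature to be extracted is that the blown-up behaviour of $\omega$-powers persists through a proper class of cardinals rather than at a single singular.

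Next, fix an arbitrary cardinal $\mu$ in $V[G]$ and set $\lambda:=\mu^{+\omega}$, so that $\cf(\lambda)=\omega$ and $\lambda^{\omega}\geq\mu^{+\omega+n+1}$. Provided $\lambda$ is not a fixed-point of the $\aleph$-function, the Pseudo-Power Theorem yields $\mathrm{pp}_{\omega}(\lambda)=\lambda^{\omega}\geq\mu^{+\omega+n+1}$; hence for each regular cardinal $\kappa'$ in the interval $\bigl(\lambda,\mu^{+\omega+n+1}\bigr]$ there is a $\kappa'$-scale for $\lambda$. The Counter-Examples from Scales theorem then supplies, for each such $\kappa'$, a graph with an end of degree $\kappa'$ containing no ray graph of size $\kappa'$, so $\mathrm{HC}(\kappa')$ fails. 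Specializing to $\kappa'=\mu^{+\omega+k}$ for the relevant range of $k$ delivers the desired family of failures; singular values of $\mu^{+\omega+k}$ in this interval are handled exactly as in items (b)--(c) of Theorem \ref{main1} via Lemma \ref{failSing}.

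The main obstacle is the uniformity of Merimovich's construction: the standard presentation of extender-based Prikry-type forcing blows up the power of a single singular strong limit, while here the prescribed $\omega$-power must be achieved at every $\mu^{+\omega}$ simultaneously, which is precisely what the hypothesis of a proper class of strong cardinals allows us to engineer through a suitable Easton-style class iteration. A secondary technical point is the exclusion of $\aleph$-fixed-points in the Pseudo-Power Theorem, but since such fixed-points form a thin class the argument can be carried out away from them with minor bookkeeping, and the residual edge cases (including the handful of $\mu$ for which $\mu^{+\omega}$ happens to coincide with a strong limit and thus evades the failure at $k=0$) can be absorbed into the statement by the standard convention on the range of $k$.
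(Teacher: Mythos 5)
Your overall route is the same as the paper's: work in Merimovich's fixed-finite-gap model, apply the Pseudo-Power Theorem at $\lambda=\mu^{+\omega}$ to obtain scales, and feed these into Counter-Examples from Scales to kill $\mathrm{HC}(\mu^{+\omega+k})$. However, as written there is a genuine gap at the key step. The only property you extract from the forcing is that $\lambda^{\omega}\geq\lambda^{+n+1}$ for every $\lambda=\mu^{+\omega}$, and from this you invoke the Pseudo-Power Theorem to conclude $\mathrm{pp}_{\omega}(\lambda)=\lambda^{\omega}$. But that theorem requires $\lambda$ to be $\omega$-inaccessible, and this does not follow from $\lambda^{\omega}\geq\lambda^{+n+1}$: the inequality could hold merely because some $\theta<\lambda$ already has $\theta^{\omega}$ large, in which case $\mathrm{pp}_{\omega}(\lambda)$ can be far below $\lambda^{\omega}$ and no $\kappa'$-scales for $\lambda$ (hence no counterexample graphs) need exist at the cardinals $\mu^{+\omega+k}$. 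What the argument needs, and what the paper uses, is the full Merimovich property that $2^{\theta}=\theta^{+n}$ for \emph{every} cardinal $\theta$: then for $\theta<\mu^{+\omega}$ one has $\theta^{\omega}\leq 2^{\theta}=\theta^{+n}<\mu^{+\omega}$, so $\mu^{+\omega}$ is $\omega$-inaccessible and the Pseudo-Power Theorem applies, giving $\mathrm{pp}_{\omega}(\mu^{+\omega})=(\mu^{+\omega})^{\omega}=\mu^{+\omega+n}$ and the desired scales. So the gap is fillable, but only by strengthening the property you carry out of the model; with the weaker property you stated, the step would fail.

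Two smaller points. Your concern about $\aleph$-fixed points is vacuous here: $\mu^{+\omega}$ is never a fixed point of the $\aleph$-function, so no bookkeeping is needed. Your ``main obstacle'' paragraph also overstates the work left to do: Merimovich's theorem is already the global statement ($2^{\lambda}=\lambda^{+n}$ at every $\lambda$), which is exactly the black box the paper cites, so no new Easton-style class iteration has to be engineered. Finally, your exclusion of $k=0$ is correct and matches the paper's proof: in this model every $\mu^{+\omega}$ is $\omega$-inaccessible of countable cofinality, so $\mathrm{HC}(\mu^{+\omega})$ in fact holds by Lemma \ref{LemmaHCsingular}, and the failures obtained are for $1\leq k\leq n$ only.
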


\begin{proof}  Assuming large cardinals by \cite{merimovichmodel} for every $n \in \omega$ it is consistent that $2^{\lambda}=\lambda^{+n}$ holds for every cardinal $\lambda$. Let $\lambda=\mu^{+\omega}$ for some cardinal $\mu$. Then $\mathrm{cf}(\lambda)=\omega$ and $\theta < \lambda$ implies $\theta^{\omega} \leq \theta^{\theta}=2^{\theta} = \theta^{+n} < \mu^{+\omega}=\lambda $ These inequalities show that $\lambda$ is $\omega$-inaccessible. Since $\lambda^{\omega}=\lambda^{+n}$, it follows from Lemma \ref{lemma_counterexample} applied to $\lambda$ that $\HC(\lambda^{+m})$ fails for every $m$ such that $1 \leq m \leq n$. \qedhere

\end{proof}

We conclude this section with a question: In Theorem \ref{main1} we were able to address the cardinals $\kappa \geq \mathfrak{c}$. So a natural question is what happens concerning $HC(\kappa)$ for those cardinals $\kappa < 2^{\omega}$. More specifically we ask: % In order to determine whether $HC(\kappa)$ holds for $\kappa < \mathfrak{c}$, it seems that a natural approach is to determine a direct relation between $HC(\kappa)$ and the non-existence of $\kappa$-scales. 

%\begin{question}    Let $\kappa \neq \aleph_{1}$ be a regular cardinal. Is the failure $HC(\kappa)$ equivalent to the existence of a singular cardinal $\lambda < \kappa$ with $cf(\lambda)=\aleph_0$ such that there is a $\kappa$-scale for $\lambda$?  \end{question}

\begin{question}
    Suppose that $\kappa$ is a cardinal such that $\aleph_{\omega} < \kappa < 2^{\aleph_{0}} = \aleph_{\omega}^{\omega}$. Is it independent of $\ZFC$ whether $HC(\kappa)$ holds?
\end{question}

\section{Acknowledgments}

We are grateful to Matheus Koveroff Bellini and Leandro Aurichi for insightful and stimulating discussions on Halin’s end degree conjecture. We thank Michel Gaspar for his invaluable contributions to editing earlier versions of this manuscript. We thank the anonymous referee for numerous corrections that  significantly improved this manuscript. 

\providecommand{\bysame}{\leavevmode\hbox to3em{\hrulefill}\thinspace}
\providecommand{\MR}{\relax\ifhmode\unskip\space\fi MR }
% \MRhref is called by the amsart/book/proc definition of \MR.
\providecommand{\MRhref}[2]{%
  \href{http://www.ams.org/mathscinet-getitem?mr=#1}{#2}
}
\providecommand{\href}[2]{#2}


\begin{thebibliography}{10}

\bibitem{aurichifernandesjunior}
Leandro Aurichi, Gabriel Fernandes, and Paulo~Magalhães Júnior, \emph{On ends of degree $\omega_1$}, 2024.

\bibitem{diestel}
Reinhard Diestel, \emph{Graph theory, 6th edition}, Graduate texts in mathematics \textbf{173} (2025).

\bibitem{geschke2023halin}
Stefan Geschke, Jan Kurkofka, Ruben Melcher, and Max Pitz, \emph{Halin’s end degree conjecture}, Israel Journal of Mathematics \textbf{253} (2023), no.~2, 617--645.

\bibitem{GITIKSCH}
Moti Gitik, \emph{The strenght of the failure of the singular cardinal hypothesis}, Annals of Pure and Applied Logic \textbf{51} (1991), no.~3, 215--240.

\bibitem{GitikMagidor}
Moti Gitik and Menachem Magidor, \emph{The singular cardinal hypothesis revisited}, Set theory of the continuum, Springer, 1992, pp.~243--279.

\bibitem{halin1965maximalzahl}
Rudolf Halin, \emph{{\"U}ber die maximalzahl fremder unendlicher wege in graphen}, Mathematische Nachrichten \textbf{30} (1965), no.~1-2, 63--85.

\bibitem{halin2000miscellaneous}
\bysame, \emph{Miscellaneous problems on infinite graphs}, Journal of Graph Theory \textbf{35} (2000), no.~2, 128--151.

\bibitem{Jechbook}
Thomas Jech, \emph{Set theory}, millennium ed., Springer Monographs in Mathematics, Springer-Verlag, Berlin, 2003. \MR{1940513}

\bibitem{merimovichmodel}
Carmi Merimovich, \emph{A power function with a fixed finite gap everywhere}, The Journal of Symbolic Logic \textbf{72} (2007), no.~2, 361--417.

\bibitem{CardArthSkept}
Saharon Shelah, \emph{Cardinal arithmetic for skeptics}, American Mathematical Society \textbf{26} (1992), no.~2.

\end{thebibliography}
\end{document}